\theoremstyle{plain}
\newtheorem{thm}{Theorem}[section]
\newtheorem{prop}[thm]{Proposition}
\theoremstyle{definition}
\newtheorem{defn}[thm]{Definition}
\newtheorem{rem}[thm]{Remark}
\numberwithin{equation}{section}
\newcommand{\squarething}[1]{
  \noindent
  \begin{center}
  \framebox{
    \vbox{
      \vspace{4mm}
      \hbox to 5.78in { {\Large \hfill #1  \hfill} }
      \vspace{4mm}
    }
  }
  \end{center}
  \vspace*{4mm}
}
\DeclareMathOperator{\coord}{Coord}
\DeclareMathOperator{\id}{id}
\DeclareMathOperator{\en}{End}
\DeclareMathOperator{\Hom}{Hom}
\DeclareMathOperator{\res}{Res}
\DeclareMathOperator{\aut}{Aut}
\DeclareMathOperator{\img}{im}
\DeclareMathOperator{\coker}{coker}
\DeclareMathOperator{\Ext}{Ext}
\DeclareMathOperator{\der}{Der}
\newcommand{\dualfactor}{\mathcal{R}}
\newcommand{\D}{\Delta}
\newcommand{\C}{\mathbb{C}}
\newcommand{\Z}{\mathbb{Z}}
\newcommand{\bbP}{\mathbb{P}}
\newcommand{\g}{\mathfrak{g}}
\newcommand{\CA}{\mathcal{A}}
\newcommand{\CD}{\mathcal{D}}
\newcommand{\CM}{\mathcal{M}}
\newcommand{\CP}{\mathcal{P}}
\newcommand{\CV}{\mathcal{V}}
\newcommand{\OO}{\mathcal{O}}
\newcommand{\vac}{\mathbf{1}}
\begin{document}


\begin{center}
{\LARGE \bf Chiral homology of the projective line and extensions of vertex algebra modules} \par \bigskip

\renewcommand*{\thefootnote}{\fnsymbol{footnote}}
{\normalsize
Thadeu Henrique Cardoso, Jethro van Ekeren, \\
Juan Guzman and Reimundo Heluani
}

\par \bigskip
{\footnotesize Instituto de Matem\'{a}tica Pura e Aplicada \\
Rio de Janeiro, RJ, Brazil}

\par \bigskip


\end{center}

\vspace*{10mm}

\noindent
\textbf{Abstract.} The purpose of this note is to establish an isomorphism from the vector space of extensions between two modules over a vertex algebra, to an appropriate first chiral homology of one dimensional projective space with coefficients in the corresponding chiral algebra.


\section{Introduction}

Let $\g$ be a finite dimensional semisimple Lie algebra over the complex number field $\C$. A classic theorem of Weyl asserts complete reducibility of all finite dimensional $\g$-modules. The proof has two ingredients. The first is a general construction of homological algebra, namely an isomorphism from the vector space $\Ext^1(C, A)$ of equivalence classes of extensions of $C$ by $A$ (where $A$ and $C$ are $\g$-modules), to a cohomology space $H^1(\g, \Hom(C, A))$ which may be described explicitly in terms of Chevalley-Eilenberg cocycles. The second is a proof of vanishing of cohomology using the semisimplicity hypothesis.

Vertex algebras are certain, usually infinite dimensional, algebraic objects with important applications in representation theory and theoretical physics. Homology and cohomology theories of vertex algebras have been introduced and studied by various authors, see \cite{Huang2014}, \cite{Huang-Fei} and \cite{Liberati2017} for example, as well as the series of articles starting with \cite{BDHK2019}. In these approaches, first cohomology $H^1(V, M)$ recovers the appropriate notion of outer derivation of a module $M$ over a vertex algebra $V$, parallel to the case of Lie algebras.

Meanwhile Beilinson and Drinfeld \cite{BD} have introduced the notion of chiral algebra over an algebraic curve $X$, as well as a definition of chiral homology of $X$ with coefficients in a chiral algebra $\CA$ over $X$. Chiral homology generalises the construction of conformal blocks, which is in turn recovered in degree $0$. A conformal vertex algebra $V$ gives rise to a chiral algebra $\CA_V$ over each algebraic curve $X$, see \cite{FBZ}. A key feature of chiral homology is that multiple modules can be `inserted' at distinct points of $X$.

In this note we prove the following theorem.
\begin{thm}\label{thm:main.1}
Let $V$ be a conformal vertex algebra and $A$ and $C$ two admissible $V$-modules. Then
\[
\Ext^1(C, A) \cong H_{1}^{\textup{ch}}(\bbP^1, \CA_{V},\CM_{C,0},\CM_{A^{\vee},\infty})^{*}.
\]
\end{thm}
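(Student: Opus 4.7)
The plan is to construct the isomorphism explicitly, modelled on the classical Lie-algebraic identification of $\Ext^1(C,A)$ with $H^1(\mathfrak{g}, \Hom(C,A))$ via Chevalley--Eilenberg cocycles, and then translate this into the Chevalley--Cousin complex of \cite{BD} computing chiral homology on $\bbP^1$. As a consistency check and guide, one should first verify the degree-zero statement: $H_0^{\textup{ch}}(\bbP^1, \CA_V, \CM_{C,0}, \CM_{A^\vee,\infty})^*$ is the standard space of two-point conformal blocks on $\bbP^1$ and, using that $A^\vee$ is the contragredient of $A$, is canonically isomorphic to $\Hom_V(C, A)$. The theorem is the derived analogue of this fact, with $\Ext^1$ and $H_1^{\textup{ch}}$ replacing $\Hom_V$ and $H_0^{\textup{ch}}$.

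The map $\Phi: \Ext^1(C, A) \to H_1^{\textup{ch}}(\bbP^1, \CA_V, \CM_{C,0}, \CM_{A^\vee,\infty})^*$ I would construct as follows. Given a short exact sequence $0 \to A \xrightarrow{\iota} B \xrightarrow{\pi} C \to 0$ of admissible $V$-modules, fix a linear (but not necessarily $V$-equivariant) splitting $s: C \to B$; then
\[
\tau_s(v,z,c) := Y_B(v,z)\, s(c) - s(Y_C(v,z)\, c) \in A((z))
\]
measures the failure of equivariance. Pairing $\tau_s$ with $a \in A^\vee$ at infinity produces a meromorphic function of $z$ on $\bbP^1$ with poles controlled at $0$ and $\infty$. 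I would show that this data assembles into a linear functional on the chiral chain complex, that the cycle condition follows from the Borcherds/Jacobi identities for $Y_B$, and that changing $s$ modifies the functional by a coboundary, so that $\Phi$ descends to equivalence classes of extensions.

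For the inverse, a class $[\phi]$ in the target should reconstruct an extension structure on $B := A \oplus C$ by specifying the matrix coefficients of a candidate vertex operator $Y_B$: the diagonal blocks recover the $V$-module structures of $A$ and $C$, while the off-diagonal block $C \to A$ is dictated by $[\phi]$. The main obstacle is verifying that the Chevalley--Cousin cocycle condition on $\bbP^1 \setminus \{0, \infty\}$ translates precisely into the Jacobi identity required for $Y_B$ to define a $V$-module structure, so that $B$ is genuinely an extension rather than merely a consistent collection of operators. The key inputs here should be the factorization structure of $\CA_V$ and the interpretation of $\CM_{A^\vee, \infty}$ via the contragredient pairing at $\infty$. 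Once this translation is in place, linearity of $\Phi$ is immediate, and showing that the two constructions are mutually inverse (trivial extensions producing coboundaries, and reconstructed modules reproducing the original cocycle) becomes routine.
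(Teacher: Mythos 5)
Your proposal follows essentially the same route as the paper: the cocycle $\xi(a,z)c = Y_B(a,z)s(c) - s(Y_C(a,z)c)$ attached to a linear splitting, paired against $\varphi \in A^\vee$ to give a functional on the chiral chain complex, with the cycle condition coming from the Borcherds identity in $B$ and the inverse built by equipping $A \oplus C$ with an off-diagonal block dictated by the homology functional. The paper carries out exactly this plan (including the injectivity step, where the functional on $\operatorname{im} d_1$ is converted into a graded map $\eta: C \to A$ correcting the splitting), so no substantive difference to report.
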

We remark that Theorem \ref{thm:main.1} actually holds more generally for M\"{o}bius vertex algebras, see Remark \ref{rem:mobius} below. As an essentially immediate corollary of Theorem \ref{thm:main.1} we deduce
\begin{thm}\label{thm:main.2}
Let $V$ be a conformal vertex algebra. Then $V$ is rational if and only if
\[
H^{\textup{ch}}_1(\bbP^1, \CA_V, \CM_{C,0},\CM_{A^{\vee},\infty}) = 0
\]
for all admissible $V$-modules $A$ and $C$.
\end{thm}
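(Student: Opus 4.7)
The plan is to use Theorem \ref{thm:main.1} to translate the chiral homology vanishing into a purely homological condition, and then invoke the standard characterization of semisimplicity of an abelian category in terms of vanishing of $\Ext^1$.

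First I would apply Theorem \ref{thm:main.1} pair by pair: for admissible $V$-modules $A$ and $C$, that theorem identifies $H^{\textup{ch}}_1(\bbP^1, \CA_V, \CM_{C,0}, \CM_{A^{\vee},\infty})$ with the linear dual of $\Ext^1(C, A)$ in the category of admissible modules. The hypothesis of the theorem is therefore equivalent to the condition that $\Ext^1(C, A) = 0$ for all admissible $A$ and $C$.

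Next I would recall the definition: $V$ is rational precisely when every admissible $V$-module is completely reducible. It remains to show that semisimplicity of the admissible module category is equivalent to identical vanishing of $\Ext^1$ on that category. The forward direction is formal, since complete reducibility splits every short exact sequence. For the converse, given an admissible module $M$ with socle $N$, if $N$ were a proper submodule I would choose a simple submodule $S$ of $M/N$ and consider the pullback extension
\[
0 \longrightarrow N \longrightarrow \pi^{-1}(S) \longrightarrow S \longrightarrow 0.
\]
The $\Ext^1$ hypothesis forces this extension to split, producing a simple submodule of $M$ not contained in $N$ and contradicting the maximality of the socle. Hence $M = N$ is semisimple.

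The main obstacle I anticipate is not the homological bookkeeping but rather justifying the existence of a simple submodule inside the nonzero admissible module $M/N$, which is implicit both in invoking the socle and in extracting $S$. This is standard in the vertex algebra literature, and should follow from the $\N$-grading of admissible modules with finite-dimensional graded pieces: a nonzero vector of minimal weight generates a cyclic admissible submodule whose lowest weight space carries a finite-dimensional action of the Zhu algebra $A(V)$, and Zhu's correspondence then yields an irreducible admissible submodule. Once this ingredient is in place, the theorem follows at once from Theorem \ref{thm:main.1}, confirming the authors' characterization of it as essentially immediate.
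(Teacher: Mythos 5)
Your proposal is correct and coincides with the paper's (implicit) argument: the paper derives this statement directly from Theorem \ref{thm:main.1} together with the standard equivalence between complete reducibility of all objects and identical vanishing of $\Ext^1$, exactly as you do. One caveat on your final paragraph: admissible modules here are \emph{not} assumed to have finite-dimensional graded pieces, so instead of invoking Zhu's correspondence you should extract the simple submodule of $M/N$ from the $\Ext^1$-vanishing hypothesis itself --- take the cyclic graded submodule generated by a nonzero homogeneous vector, use Zorn's lemma to find a maximal proper graded submodule, and split off the resulting simple quotient as a direct summand.
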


\emph{Acknowledgements:} The authors would like to thank Tomoyuki Arakawa for helpful conversations. TC has been supported by a PhD stipend from Capes, JvE by grants Serrapilheira -- 2023-0001, CNPq 306498/2023-5 and FAPERJ 201.445/2021, JG by an INCTMat postdoc, and RH by CNPq 311451/2023-3 and a CNE grant from FAPERJ.

\section{Background on vertex algebras}

A standard reference for this material is \cite{KacVA}, see also \cite{FHL} and \cite{FBZ}. A quantum field on a vector space $V$ is a formal series
\[
A(z) = \sum_{n \in \Z} a_{(n)} z^{-n-1}
\]
with coefficients $a_{(n)} \in \en(V)$, such that for each $b \in V$ one has $A(z)b \in V((z))$ the vector space of formal Laurent series.

Let $\C(z, w)$ denote the fraction field of the polynomial algebra $\C[z, w]$. We define $i_{z, w} : \C(z, w) \rightarrow \C((z))((w))$ to be the unique morphism extending the inclusion $\C[z, w] \subset \C((z))((w))$, and $i_{w, z}$ similarly.

We use the symbol $\res_{z=0} f(z)$ to denote the coefficient of $z^{-1}$ of a formal power series $f(z) \in U[[z^{\pm 1}]]$, and the symbol $\res_{z_1=z_2} f(z_1, z_2)$ to denote the coefficient of $(z_1-z_2)^{-1}$ of the image of $f(z_1, z_2)$ under the natural map
\[
U[[z_1, z_2]][z_1^{-1}, z_2^{-1}, (z_1-z_2)^{-1}] \rightarrow U((z_2))[(z_1-z_2)^{-1}],
\]
obtained by expanding powers of $z_1 = z_2 + (z_1-z_2)$ in positive powers of $z_1-z_2$.

The $n^{\text{th}}$ product of quantum fields $A(w)$ and $B(w)$ on a vector space $V$ is
\[
A(w)_{(n)}B(w) = \res_{z=0} \left( A(z)B(w) i_{z, w} - B(w)A(z) i_{w, z} \right) (z-w)^n,
\]
which is well-defined and again a quantum field on $V$.

\begin{defn}\label{defn:va}
A vertex algebra consists of a vector space $V$, for each $a \in V$ a quantum field
\[
Y(a, z) = \sum_{n \in \Z} a_{(n)} z^{-n-1}
\]
on $V$, as well as a distinguished vector $\vac \in V$ (the vacuum vector) such that the following conditions are satisfied.
\begin{itemize}
\item For all $a, b, c \in V$ and $f(z_1, z_2) \in \C[z_1^{\pm 1}, z_2^{\pm 1}, (z_1-z_2)^{-1}]$ the following equality holds
\begin{align*}
\res_{z_2=0}\res_{z_1=z_2} & f(z_1,z_2)Y(Y(a,z_1 - z_2)b,z_2)c \\
= {} & \res_{z_1=0}\res_{z_2=0}f(z_1,z_2)Y(a,z_1)Y(b,z_2)c \\
&- \res_{z_1=0}\res_{z_2=0}f(z_1,z_2)Y(b,z_2)Y(a,z_1)c,
\end{align*}

\item $Y(\vac, z) = I_V$, and

\item for each $a \in V$ we have $Y(a, z)\vac \in V[[z]]$ and $Y(a, z)\vac|_{z=0} = a$.
\end{itemize}
\end{defn}
The translation operator $T \in \en(V)$ is recovered as follows: $Y(a, z)\vac = a + (Ta) z + \cdots$, and satisfies
\begin{align}\label{eq:T.property}
[T, Y(a, z)] = Y(Ta, z) = \partial_z Y(a, z)
\end{align}
for all $a\in V$.

\begin{defn}\label{defn:va.module}
Let $V$ be a a vertex algebra. A $V$-module is a vector space $M$ and for each $a \in V$ a quantum field
\[
Y_M(a, z) = \sum_{n \in \Z} a^M_{(n)} z^{-n-1}
\]
on $M$, such that the following conditions are satisfied.
\begin{itemize}
\item For all $a, b, \in V$ and $m \in M$, and $f(z_1, z_2) \in \C[z_1^{\pm 1}, z_2^{\pm 1}, (z_1-z_2)^{-1}]$ the following equality holds
\begin{align*}
\res_{z_2=0}\res_{z_1=z_2} & f(z_1,z_2)Y_M(Y(a,z_1 - z_2)b,z_2)m \\
= {} & \res_{z_1=0}\res_{z_2=0}f(z_1,z_2)Y_M(a,z_1)Y_M(b,z_2)m \\
&- \res_{z_1=0}\res_{z_2=0}f(z_1,z_2)Y_M(b,z_2)Y_M(a,z_1)m,
\end{align*}

\item $Y_M(\vac, z) = I_M$.
\end{itemize}
\end{defn}

\begin{rem}
The definition of $V$-module given above is one of two variants. In the other variant, an endomorphism $T \in \en(M)$ is also given, for which the obvious analogue of \eqref{eq:T.property} is required to hold. In this article we shall primarily be concerned with conformal vertex algebras \textup{(}Definition \ref{def:conformal.VA} below\textup{)} and their modules, and in this context there is no difference between the two definitions.
\end{rem}

\begin{defn}\label{def:conformal.VA}
A conformal vertex algebra is a vertex algebra $V$ together with a grading $V = \bigoplus_{n \in \Z_+} V_n$ and a conformal vector $\omega \in V_2$ whose associated quantum field $L(w) = \sum_{n \in \Z} L_n w^{-n-2}$ equips $V$ with a representation of some central charge $c$ of the Virasoro Lie algebra
\[
[L_m, L_n] = (m-n) L_{m+n} + \delta_{m, -n} \frac{m^3-m}{12} c I_V,
\]
compatible with the vertex algebra structure in the sense that (1) $L_0|_{V_n} = n I_{V_n}$ and (2) $L_{-1} = T$.
\end{defn}

\begin{defn}
Let $V$ be a conformal vertex algebra. A $V$-module $M$ is said to be admissible if it carries a $\Z_+$-grading $M = \bigoplus_{n \in \Z_+ } M_n$ such that $a_{(m)}(M_n) \subseteq M_{r+n-m-1}$ for all $m \in \Z$, $r,n \in \Z_+$ and $a \in V_{r}$. The vertex algebra $V$ is said to be rational if every admissible $V$-module is completely reducible.
\end{defn}

\begin{rem}\label{rem:exponentiate.actions.admissible}
If $V$ is a conformal vertex algebra, then any $V$-module $M$ is in particular a representation of the Virasoro Lie algebra via the quantum field $Y_M(\omega,z)=\sum_{n \in \Z} L_n z^{-n-2}$ {\cite[Prop. 4.1.5]{LL}}. If $M$ is admissible, then the action of the Lie subalgebra $\der_+(\C[[z]])$ topologically generated by the $L_k$ with $k \geq 1$ is locally nilpotent, since $L_k(M_n) \subseteq M_{n-k}$. So we can exponentiate this action to obtain an action of the Lie group $G_+=\aut_+(\C[[z]])$ on $M$.
However, the action of $\der(\C[[z]])=\C L_0 \oplus \der_+(\C[[z]])$ cannot always be exponentiated to an action of the bigger group $G=\aut(\C[[z]])$ on $M$. This can only be done if the eigenvalues of $L_0$ are integral.
\end{rem}

We denote by $\Ext^1(C, A)$ the set of isomorphism classes of extensions of $C$ by $A$, that is of short exact sequences
\begin{align*}
\xymatrix{
0 \ar@{->}[r] & A \ar@{->}[r] & B \ar@{->}[r] & C \ar@{->}[r] & 0,
}
\end{align*}
in the category of admissible $V$-modules. This set is naturally a vector space since the category of admissible $V$-modules is a $\C$-linear abelian category.

\begin{defn}\label{def:dual}
Let $V$ be a conformal vertex algebra and $M$ an admissible $V$-module. The contragredient $V$-module $M^\vee$ is by definition the restricted dual vector space $M^\vee = \bigoplus_{n \in \Z_+} M_n^*$, equipped with the $V$-action
\begin{align}\label{eq:contragredient.eq}
[Y(a, z) \varphi](m) = \varphi\left( Y(e^{z L_1} (-z^{-2})^{L_0} a, z^{-1}) m \right).
\end{align}
\end{defn}

\begin{rem}\label{rem:contragred.topology}
The contragredient module construction is usually applied to $V$-modules whose graded pieces are finite-dimensional, such as ordinary $V$-modules (see {\cite[Section 5.2]{FHL}}). For an arbitrary admissible $V$-module with possibly infinite-dimensional graded pieces we find that the contragredient dual $M^\vee$ is still well-defined and \eqref{eq:contragredient.eq} continues to furnish $M^\vee$ with the structure of a $V$-module. In this case, each dual space $M_n^*$ acquires the structure of a topological vector space such that annihilators of finite dimensional vector spaces of $M_n$ form a base of open neighbourhoods of $\{0\}$, and $M_n^*$ is linearly compact with this topology {\cite[Chapter 1.2]{Dieudonne}}. The action of $V$ on $M^\vee$ is now continuous.
\end{rem}

\section{Chiral algebras and chiral modules}\label{sec:chiral.algebras}

Let $X$ be a smooth complex algebraic curve and $V$ a conformal vertex algebra. Following Beilinson and Drinfeld \cite{BD}, a chiral algebra on $X$ is a right $\CD_X$-module together with a morphism
\[
\mu : j_*j^*(\CA \boxtimes \CA) \rightarrow \D_*(\CA)
\]
of right $\CD_{X^2}$-modules satisfying analogues of the skew symmetry and Jacobi identities of Lie algebras. Here $\D : X \rightarrow X^2$ and $j : X^2 \backslash X \rightarrow X^2$ are the inclusions of the diagonal and its complement. In general for a right $\CD_X$-module $\CA$ one has a canonical morphism
\begin{align}\label{eq:Sato.mor}
j_*j^*(\omega_X \boxtimes \CA) \rightarrow \D_*(\CA)
\end{align}
whose kernel is $\omega_X \boxtimes \CA$. Here $\omega_X$ is the canonical sheaf with its natural right $\CD_X$-module structure. Then $\CA = \omega_X$ with \eqref{eq:Sato.mor} for $\mu$ is an example of a chiral algebra on $X$. A chiral algebra $\CA$ is said to be unital if it comes with an inclusion $\omega_X \rightarrow \CA$ compatible with \eqref{eq:Sato.mor}.

For $V$ a conformal vertex algebra, Frenkel and Ben Zvi present in \cite{FBZ} a construction of a unital chiral algebra $\CA = \CA_V$ over $X$. As a $\CD_X$-module $\CA = \CV \otimes \omega_X$ is obtained via side-changing from a left $\CD_X$-module $\CV$. As an $\OO_X$-module, $\CV$ is the colimit of finite dimensional holomorphic vector bundles constructed from truncations $V_{\leq n}$ of $V$ via an associated bundle construction.

In particular the fibres of $\CV$ have the following description. Let $G = \aut(\C[[t]])$ and let $\coord_p$ denote the $G$-torsor of formal coordinates at the point $p \in X$. The conformal structure of $V$ endows it with a left $G$-action, and the fibre of $\CV$ at $p$ is the vector space
\[
\CV_p = \coord_p \times_G V.
\]
A choice of local coordinate at $p$ furnishes a linear isomorphism between $\CV_p$ and $V$. An element of $\CV_p$ may thus be written in the form $(z, a)$, and a section of $\CV$ over a trivialising open $U \subset X$ with coordinate $z$ may be written as a linear combination of terms of the form $f(z) (z, a)$ where by abuse of notation the value of $(z, a)$ at $p \in U$ is the element of $\CV_p$ represented by $(z-z(p), a)$.

A chiral $\CA$-module on $X$ is a right $\CD_X$-module $\CM$ and a morphism
\[
\alpha_{\CM} : j_*j^*(\CA \boxtimes \CM) \rightarrow \D_*(\CM)
\]
of $\CD_{X^2}$-modules satisfying a natural condition of compatibility with $\mu$. We are interested in chiral $\CA$-modules supported at a point $p \in X$. Such modules can be constructed in the following manner: if $M$ is an admissible $V$-module such that the eigenvalues of $L_0$ are integral, then we have a well-defined action of $G$ on $M$ by Remark \ref{rem:exponentiate.actions.admissible}. Now a construction parallel to that of $\CA_V$ yields a chiral $\CA_V$-module $\CM_M$. Let $p$ be a point of $X$ and $j_p : X \backslash p \rightarrow X$ the inclusion of its complement. We may now form the $\CA_{V}$-module $\CM_{M,p}=\coker\left(\CM_{M} \rightarrow j_{p*}j_{p}^{*}\CM_{M}\right)$.

The construction described above does not work for more general $V$-modules $M$, as the $\CD_X$-module $\CM_M$ is not well-defined. We do have, however, the following construction from \cite{FBZ}. Let $M$ be any admissible $V$-module, with possibly non-integral eigenvalues of $L_0$. Then we still have an action of the subgroup $G_+ = \aut_+(\C[[t]])$ of $G$ on $M$, and we may use it to construct
\[
\CM_{M, p, \xi} = \coord_{p, \xi} \times_{G_+} M,
\]
where $\coord_{p, \xi} \subset \coord_p$ consists of those coordinates with first order differential $\xi \in T_p X$. Then $\CM_{M, p, \xi}$ acquires an action $\alpha_{\CM_{M, p, \xi}}$ of local sections of $\CA_V$ {\cite[Theorem 7.3.10]{FBZ}}. To explicitly describe this action, we make a choice of local coordinate at $z$ with first jet equal to $\xi$. This induces an identification of $\CM_{M, p, \xi}$ with $M$, and a local section $\sigma = f(z) (z, a) \, dz$ of $\CA_V$ then acts as
\begin{align}\label{eq:point.module.action}
\alpha_{\CM_{M, p, \xi}}(\sigma) m = \res_{z=0} f(z) Y_M(a, z) m.
\end{align}

In the sequel we shall be concerned with $X = \bbP^1$ the Riemann sphere, which we equip with its two standard charts $(U_1, z)$ and $(U_2, x)$, each isomorphic to $\C$, and coordinates related by $x = 1/z$ on the intersection. We consider $\CA_V$-modules supported at the points $z=0$ and $z=\infty$ (which is to say $x=0$). The choice of coordinates $z$ and $x = 1/z$ induce tangent vectors $\xi$ at $0$ and $\infty$ (which we will henceforth omit from the notation), and identifications $\CM_{M, 0} \rightarrow M$ and $\CM_{M, \infty} \rightarrow M$. The relationship between this setup and the definition of the contragredient dual is encapsulated in Proposition \ref{prop:dual.fancy} below. In its proof we will use the following result from \cite{FHL} (see Lemma 5.2.3, and equation (5.3.1) of \emph{loc. cit.})
\begin{prop}\label{prop:FHL}
The identity
\[
(-t^{-2})^{L_0} e^{t^{-1}L_1} (-t^{-2})^{L_0}  = e^{-t L_1}
\]
holds in $\en(V)[[t, t^{-1}]]$.
\end{prop}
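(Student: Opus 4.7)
The plan is to reduce the identity to the elementary Virasoro commutation relation $[L_0, L_1] = -L_1$, obtained from Definition~\ref{def:conformal.VA} with $m = 0$, $n = 1$. This says that $L_1$ has weight $-1$ under $\ad(L_0)$, so $L_1$ maps $V_n$ into $V_{n-1}$.

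The first step is to interpret $(-t^{-2})^{L_0}$ as an element of $\en(V)[[t,t^{-1}]]$. Using the conformal grading $V = \bigoplus_{n \in \Z_+} V_n$ with $L_0|_{V_n} = n \cdot I$, define $(-t^{-2})^{L_0}$ to act on $V_n$ by the scalar $(-1)^n t^{-2n}$; assembling these gives a well-defined formal series of operators. The operator $e^{t^{-1}L_1}$ is likewise well-defined on each $v \in V_n$, since $L_1^k v \in V_{n-k} = 0$ for $k > n$, so only finitely many terms are nonzero.

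Next, from $[L_0, L_1] = -L_1$ one has inductively $s^{L_0} L_1^k s^{-L_0} = s^{-k} L_1^k$ for any formal parameter $s$ with $s^{L_0}$ defined, verified by applying to an $L_0$-eigenvector $v \in V_n$: $s^{L_0} L_1^k v = s^{n-k} L_1^k v$ while $s^{-k} L_1^k s^{L_0} v = s^{-k} L_1^k s^n v = s^{n-k} L_1^k v$. Exponentiating yields the conjugation formula
\[
s^{L_0} \, e^{\alpha L_1} \, s^{-L_0} = e^{\alpha s^{-1} L_1}.
\]
With $s = -t^{-2}$ and $\alpha = t^{-1}$ we get $s^{-1} = -t^2$ and $\alpha s^{-1} = -t$, producing the exponential $e^{-tL_1}$ on the right-hand side of the claim.

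The final step is to verify the identity on each $L_0$-eigenvector by expanding both sides as formal series in $t$ and comparing coefficients, tracking the scalar factors arising from the two outer $(-t^{-2})^{L_0}$ operators acting on $L_1^k v \in V_{n-k}$ and $v \in V_n$. The main obstacle is precisely this formal bookkeeping of $(-t^{-2})^{L_0}$ as a scalar depending on the $L_0$-eigenvalue, and ensuring that the powers of $t$ collect into the Taylor coefficients of $e^{-tL_1}$; there is no conceptual content beyond the commutator $[L_0, L_1] = -L_1$ and the grading of $V$.
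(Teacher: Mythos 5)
Your strategy---reducing everything to the commutator $[L_0,L_1]=-L_1$ and the resulting conjugation formula $s^{L_0}e^{\alpha L_1}s^{-L_0}=e^{\alpha s^{-1}L_1}$---is the natural one, and in fact the paper offers no proof of this proposition at all (it simply cites \cite{FHL}), so a self-contained argument of this kind is a genuine addition rather than a rederivation. The conjugation formula itself, and its verification on $L_0$-eigenvectors, are correct as you state them.

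There is, however, a real mismatch between what your lemma delivers and what the displayed statement asserts. Your formula computes $s^{L_0}e^{\alpha L_1}s^{-L_0}$, whereas the identity as printed has the \emph{same} operator $(-t^{-2})^{L_0}$ on both sides of the exponential, i.e.\ it has the shape $s^{L_0}e^{\alpha L_1}s^{L_0}$. Since $\bigl((-t^{-2})^{L_0}\bigr)^{-1}=(-t^{2})^{L_0}$ acts on $V_n$ by $(-1)^n t^{2n}$ while $(-t^{-2})^{L_0}$ acts by $(-1)^n t^{-2n}$, the two differ by the factor $t^{-4L_0}$, and the identity as literally written is false: for $v\in V_n$ with $n>0$ and $L_1v=0$, the left-hand side gives $t^{-4n}v$ while the right-hand side gives $v$. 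The identity that is true---and is what \cite{FHL} proves, and what the paper actually invokes in the proof of Proposition~\ref{prop:dual.fancy}, where the two outer factors are $(-x^{-2})^{L_0}$ and $(-x^{2})^{L_0}$---is
\[
(-t^{-2})^{L_0}\, e^{t^{-1}L_1}\, (-t^{2})^{L_0} = e^{-tL_1},
\]
and this is exactly what your conjugation argument establishes. So your proof is correct for the intended statement, but you pass silently from $s^{-L_0}$ to the printed $s^{L_0}$ when you say the substitution produces ``the right-hand side of the claim''; had you actually carried out the final eigenvector verification you promise against the statement as written, the stray $t^{-4L_0}$ would have surfaced. You should flag the discrepancy explicitly (the statement contains a typo in its second factor) and prove the corrected identity.
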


\begin{prop}\label{prop:dual.fancy}
Let $V$ be a conformal vertex algebra, $M$ an admissible $V$-module, and $M^\vee$ its contragredient. Under the induced identifications $\CM_{M, 0} \rightarrow M$ and $\CM_{M^\vee, \infty} \rightarrow M^\vee$, we have
\[
[\alpha_{\CM_{M^\vee, \infty}}(\sigma)\varphi](m) = - \varphi(\alpha_{\CM_{M, 0}}(\sigma)m).
\]
for all $\sigma \in \Gamma(\C^\times, \CA_V)$, $m \in M$ and $\varphi \in M^\vee$.
\end{prop}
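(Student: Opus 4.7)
The plan is to reduce by linearity to the case of a basic section $\sigma = f(z)(z, a)\,dz$ with $f(z) \in \C[z, z^{-1}]$ and $a \in V$, for which \eqref{eq:point.module.action} directly gives
\[
\alpha_{\CM_{M, 0}}(\sigma) m = \res_{z=0} f(z) Y_M(a, z) m.
\]

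The main step is to rewrite $\sigma$ in the coordinate $x = 1/z$ valid near $\infty$. Since $\CA_V = \CV \otimes \omega_X$ is built from the $G$-action on $V$ coming from the conformal structure, the transition function between the two standard trivializations of $\CV$ is determined by how the local change of coordinate acts on $V$. At a point with $z$-coordinate $z_0 \neq 0$ and $x$-coordinate $x_0 = 1/z_0$, the change from $s = x - x_0$ to $t = z - z_0$ is $t = -z_0^2 s (1 + z_0 s)^{-1}$; one checks that this equals the action on $s$ of the element $e^{z_0 L_1}(-z_0^2)^{L_0} \in G$ using the elementary identities $c^{L_0}\cdot s = cs$ and $e^{vL_1}\cdot s = s(1+vs)^{-1}$. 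Together with $dz = -x^{-2}\,dx$, this expresses $\sigma$ in the $x$-trivialization as
\[
\sigma = -x^{-2} f(1/x) \bigl(x,\, e^{x^{-1} L_1}(-x^{-2})^{L_0} a\bigr)\,dx.
\]

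Applying \eqref{eq:point.module.action} at $\infty$ and then the contragredient formula \eqref{eq:contragredient.eq} yields
\[
[\alpha_{\CM_{M^\vee, \infty}}(\sigma)\varphi](m) = \res_{x=0}\bigl[-x^{-2} f(1/x)\bigr]\, \varphi\bigl(Y_M\bigl(e^{x L_1}(-x^{-2})^{L_0} e^{x^{-1} L_1}(-x^{-2})^{L_0} a,\, x^{-1}\bigr) m\bigr).
\]
Proposition \ref{prop:FHL} collapses the inner composite $(-x^{-2})^{L_0} e^{x^{-1} L_1}(-x^{-2})^{L_0}$ to $e^{-x L_1}$, so the argument of $Y_M$ reduces to $a$. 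A direct comparison of Laurent coefficients (substituting $f(z) = \sum_n c_n z^n$ and $Y_M(a, z) = \sum_k a_{(k)} z^{-k-1}$) then shows that the resulting expression equals $-\res_{z=0} f(z) \varphi(Y_M(a, z) m) = -\varphi(\alpha_{\CM_{M, 0}}(\sigma) m)$, as required.

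The main technical obstacle is pinning down the transition function for $\CA_V$ between the two charts of $\bbP^1$; once this is in hand, Proposition \ref{prop:FHL} is tailored precisely to collapse the resulting composite operator inside $Y_M$, and the residue bookkeeping at the end is elementary.
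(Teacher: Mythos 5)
Your strategy is the same as the paper's: reduce to $\sigma = f(z)(z,a)\,dz$, rewrite $\sigma$ in the chart at infinity, apply \eqref{eq:point.module.action} and the contragredient formula \eqref{eq:contragredient.eq}, and collapse the resulting composite operator. The gap is in the transition function. The operator carrying $(z,a)$ into the $x$-trivialisation is $e^{x^{-1}L_1}(-x^{2})^{L_0}$ (as in \eqref{eq:z-to-x}), not $e^{x^{-1}L_1}(-x^{-2})^{L_0}$. Passing from a coordinate change $\rho(s)=v_0 s+\cdots$ to the operator $R(\rho)$ on $V$ inverts the linear coefficient ($v_0^{\,s\partial_s}$ corresponds to $v_0^{-L_0}$), and the pair of identities you use to read off the group element, $c^{L_0}\cdot s=cs$ and $e^{vL_1}\cdot s=s(1+vs)^{-1}$, is internally inconsistent: they amount to $L_0\mapsto s\partial_s$ and $L_1\mapsto -s^2\partial_s$, which satisfy $[L_0,L_1]=+L_1$ rather than the Virasoro relation $[L_0,L_1]=-L_1$. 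A convention-independent check: for $a$ primary of conformal weight $\Delta$, the line it generates in $\gr\CV$ is the $\Delta$-th power of the tangent sheaf (so that, e.g., weight-one currents yield coordinate-independent residues and $(z,\omega)\,dz$ pairs with vector fields), and since $\partial_z=-x^{2}\partial_x$ the transition on such a vector must be multiplication by $(-x^{2})^{\Delta}=(-x^2)^{L_0}$. With your factor the composite inside $Y_M$ acts on such a vector as $x^{-4\Delta}$ rather than as the identity, and the final residue yields $-\varphi(a_{(n+4\Delta)}m)$ instead of $-\varphi(a_{(n)}m)$.

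Relatedly, the collapse of the correct composite $e^{xL_1}(-x^{-2})^{L_0}e^{x^{-1}L_1}(-x^{2})^{L_0}$ to the identity follows from the conjugation relation $c^{L_0}e^{vL_1}c^{-L_0}=e^{c^{-1}vL_1}$, which gives $(-x^{-2})^{L_0}e^{x^{-1}L_1}=e^{-xL_1}(-x^{-2})^{L_0}$, together with $(-x^{-2})^{L_0}(-x^{2})^{L_0}=\id$; this is the content that Proposition \ref{prop:FHL} is meant to supply. The version of that identity you invoke, with $(-x^{-2})^{L_0}$ as the final factor, cannot hold on vectors of nonzero weight (it would force $(-t^{-2})^{2L_0}=t^{-4L_0}=\id$), so the agreement you reach at the end results from two compensating exponent errors rather than from a correct chain of identities. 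The remaining steps of your argument --- the reduction by linearity, the use of $dz=-x^{-2}dx$, the application of \eqref{eq:contragredient.eq}, and the residue bookkeeping --- are fine and match the paper.
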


\begin{proof}
It suffices to take $z^n (z, a) \, dz$ for $\sigma$, for some $a \in V$ and $n \in \Z$. We first write $\sigma$ in terms of the trivialisation associated with $x$. At the point $x = x_0$ we have local coordinates $x - x_0$ and $z - x_0^{-1}$ related by $z - x_0^{-1} = g(x - x_0)$ where
\[
g(t) = -x_0^{-1} t (x_0+t)^{-1} = e^{-x_0^{-1} t^2 \partial_t} \cdot \left( (-x_0^2)^{-t\partial_t} \cdot t \right).
\]
The linear automorphism of $V$ corresponding to $g$ is
\[
R(g) = e^{x_0^{-1} L_1} (-x_0^2)^{L_0},
\]
and we have $(z - x_0^{-1}, a) = (x - x_0, R(g)a)$. Since $dz = -x^{-2} dx$, we obtain
\begin{align}\label{eq:z-to-x}
\sigma = -x^{-n-2} (x, e^{x^{-1} L_1} (-x^2)^{L_0} a) \, dx.
\end{align}

We now turn to the proof of the proposition. As per formula \eqref{eq:point.module.action}, we have
\begin{align*}
[\alpha_{\CM_{M^\vee, \infty}}(\sigma)\varphi](m)
&= -[\res_{x=0} x^{-n-2} Y_{M^\vee}(e^{x^{-1} L_1} (-x^2)^{L_0} a, x) \varphi](m) \\
&= -\varphi \left( \res_{x=0} x^{-n-2} Y(e^{x L_1} (-x^{-2})^{L_0} e^{x^{-1} L_1} (-x^2)^{L_0} a, x^{-1}) m \right) \\
&= - \varphi \left( \res_{x=0} x^{-n-2} Y(a, x^{-1}) m \right),
\end{align*}
where in the first line we have used \eqref{eq:z-to-x}, to pass to the second line we have used Definition \ref{def:dual}, and to pass to the third line we have used Proposition \ref{prop:FHL} above.

But clearly
\begin{align*}
\varphi \left( \res_{x=0} x^{-n-2} Y(a, x^{-1}) m \right) = \varphi(a_{(n)}m) = \varphi \left( \res_{z=0} z^{n} Y(a, z) m \right) = \varphi(\alpha_{\CM_{M, 0}}(\sigma)m)
\end{align*}
and so we are done.
\end{proof}

We will abuse notation slightly and write
\begin{align*}
\alpha_{\CM_{M^\vee, \infty}}(\sigma)\varphi
= \res_{z=\infty}f(z) Y_{M^\vee}(\dualfactor(z)a, z^{-1}) \varphi
\end{align*}
where $\dualfactor(z) = z^{2} e^{z L_1} (-z^{-2})^{L_0}$ and $\res_{z=\infty}$ means to substitute $x=z^{-1}$ and apply $\res_{x=0}$. Then the statement of Proposition \ref{prop:dual.fancy} becomes
\begin{align}\label{eq:dual}
\res_{z=\infty} f(z) \left(Y_{M^{\vee}}(\dualfactor(z)a,z^{-1})\varphi \right)(m)
=-\varphi(\res_{z=0}f(z) Y_{M}(a,z)m).
\end{align}

\begin{rem}\label{rem:mobius}
Let $X$ be a curve with a choice of projective structure, i.e., atlas relative to which transition functions are restrictions of M\"{o}bius transformations. This data yields a principal $PGL_2(\C)$-bundle on $X$ with flat connection, and a reduction of the structure group to a fixed Borel $B$ \cite[Section 8.2]{FBZ}. We denote this principal $B$-bundle as $\CP$. In particular $X = \bbP^1$ comes equipped with such structures.

A vertex algebra $V$ is said to be M\"{o}bius, roughly speaking, if it possesses a conformal grading and a compatible action of $\mathfrak{sl}_2 = \left<L_{-1}, L_0, L_1\right>$, see {\cite[Definition 4.9]{KacVA}} for details. In particular a conformal vertex algebra is M\"{o}bius.

Exponentiating the action of $L_0$ and $L_1$ on the M\"{o}bius vertex algebra $V$ and applying the associated bundle construction as in \cite{FBZ} produces a left $\CD_X$-module $\CV = \CP \times_B V$ and a chiral algebra structure on the corresponding right $\CD_X$-module $\CV \otimes \omega_X$, with the chiral operation locally given by the vertex algebra state-field correspondence \cite{BD}. All the results proved below go through at this greater level of generality.
\end{rem}

\section{Chiral homology}\label{sec:homology}

Let $A$ and $C$ be a pair of admissible $V$-modules. We now introduce a chain complex $C_{\bullet}$ that will compute chiral homology $H_{1}^{\textup{ch}}(\bbP^1, \CA_{V}, \CM_{C,0},\CM_{A^{\vee},\infty})$.

We write $(U_1, z)$ and $(U_2, x)$ for the affine charts of $\bbP^1$ as in Section \ref{sec:chiral.algebras}, and we write $X$ for $U_1 \cap U_2 = \bbP^{1} \setminus \{0,\infty\}$. Let us denote by $\Gamma_n$ the coordinate ring of the configuration space $F(X,n)=\{ (x_1, \dots, x_n) \in X^{n} \colon x_i \neq x_j \text{ for all } i\neq j \}$, i.e.,
\[
\Gamma_n=\text{Spec }\C[z_i, z_i^{-1},(z_i-z_j)^{-1} \colon 1 \leq i < j \leq n ].
\]
For each $i = 1,\dots,n$ we will denote by $\partial_i$ the derivation of $\Gamma_n$ defined to be the pullback of $\partial_z$ along the $i^{\text{th}}$ projection map.

We now define
\[
\widetilde{C}_n=\Gamma_n \otimes V^{\otimes n} \otimes A^{\vee} \otimes C\quad \text{and}\quad C_n=\widetilde{C}_n / \sum_{i=1}^{n} (\partial_i + T_i)\widetilde{C}_n,
\]
where $T_i$ denotes the action of the translation operator on the $i^{\text{th}}$ tensor factor of $V^{\otimes n}$.

The differential $d_n \colon C_n \rightarrow C_{n-1}$ will be defined first as a map $d_n \colon \widetilde{C}_n \rightarrow \widetilde{C}_{n-1}$, which in fact will descend to the quotient $C_{\bullet}$ and endow it with a chain complex structure (see \cite{vEH2023}).

We define
\[
d_n = \sum_{1 \leq i < j \leq n} (-1)^{n-i} d^{(ij)} + \sum_{1 \leq i \leq n} (-1)^{n-i}\left(p_C^{(i)}+p_{A^{\vee}}^{(i)}\right),
\]
where
\begin{align*}
& d^{(ij)}( f(z_1,\dots,z_n) a^1 \otimes \cdots \otimes a^n \otimes \varphi \otimes c) \\
&= {} \res_{z_i=z_j} f(z_1,\dots,z_n)a^1 \otimes \cdots \otimes \widehat{a}^{i} \otimes \cdots
\otimes Y(a^i,z_i-z_j)a^j \otimes \cdots \otimes a^n \otimes \varphi \otimes c \big|_{(z_1,\dots,\widehat{z}_i,\dots,z_n)=(z_1,\dots,z_{n-1})},
\end{align*}
\begin{align*}
& p_{C}^{(i)}( f(z_1,\dots,z_n) a^1 \otimes \cdots \otimes a^n \otimes \varphi \otimes c) \\
= {} & \res_{z_i=0}f(z_1,\dots,z_n)a^1 \otimes \cdots \otimes \widehat{a}^{i} \otimes \cdots
\otimes a^n \otimes \varphi \otimes Y_{C}(a^i,z_i) c \big|_{(z_1,\dots,\widehat{z}_i,\dots,z_n)=(z_1,\dots,z_{n-1})}
\end{align*}
and
\begin{align*}
& p_{A^{\vee}}^{(i)}(f(z_1,\dots,z_n) a^1 \otimes \cdots \otimes a^n \otimes \varphi \otimes c)\\
= {} & \res_{z_i=\infty} f(z_1,\dots,z_n)a^1 \otimes \cdots \otimes \widehat{a}^{i} \otimes \cdots \otimes a^n \otimes Y_{A^{\vee}}(\dualfactor(z_i)a^i,z_i^{-1}) \varphi \otimes c \big|_{(z_1,\dots,\widehat{z}_i,\dots,z_n)=(z_1,\dots,z_{n-1})}.
\end{align*}
In the last of these definitions the symbols $\res_{z_i=\infty}$ and $\dualfactor(z_i)$ are understood as in equation (\ref{eq:dual}) and preceding comments, and the residue is to be taken after expanding the function $f(z_1, \ldots, z_n)$ in the domain $|z_i| > |z_j|$ for $j\neq i$.

In particular, we have
\begin{align}\label{eq:d1}
\begin{split}
d_1(f(z)a \otimes \varphi \otimes c) = {} & \res_{z=\infty} f(z)Y_{A^{\vee}}(\dualfactor(z)a,z^{-1})\varphi \otimes c \\
&+\res_{z=0} f(z)\varphi \otimes Y_{C}(a,z)c
\end{split}
\end{align}
and
\begin{align}\label{eq:d2}
\begin{split}
d_2(f(z_1,z_2)a \otimes b \otimes \varphi \otimes c) = &-\res_{z_1=z_2}f(z_1,z_2)Y(a,z_1-z_2)b \otimes \varphi \otimes c \big|_{z_2=z_1}\\
&-\res_{z_1=\infty}f(z_1,z_2)b \otimes Y_{A^{\vee}}(\dualfactor(z_1) a,z_1^{-1}) \varphi \otimes c \big|_{z_2=z_1}\\
&-\res_{z_1=0}f(z_1,z_2)b \otimes \varphi \otimes Y_{C}(a,z_1)  c \big|_{z_2=z_1}\\
&+\res_{z_2=\infty}f(z_1,z_2)a \otimes Y_{A^{\vee}}(\dualfactor(z_2) b,z_2^{-1}) \varphi \otimes c \big|_{z_1=z_1}\\
&+\res_{z_2=0}f(z_1,z_2)a \otimes \varphi \otimes Y_{C}(b,z_2)  c \big|_{z_1=z_1}.
\end{split}
\end{align}

\begin{rem}
By Proposition \ref{prop:dual.fancy} and \eqref{eq:d1} we have
\begin{align*}
d_1(z^n a \otimes \varphi \otimes c) = \varphi \otimes a_{(n)}c - \varphi \circ a_{(n)} \otimes c.
\end{align*}
\end{rem}

The homology groups $H_{n}(C_{\bullet})$, which we denote $H_{n}^{\text{ch}}(\bbP^1,\CA_{V},\CM_{C,0},\CM_{A^{\vee},\infty})$, coincide with those defined by Beilinson and Drinfeld in \cite[Chapter 4]{BD} for $n=0$ and $n=1$. See \cite[Section 7]{vEH2021}.

\section{The main theorem}

Let $V$ be a conformal vertex algebra and let $A$ and $C$ be two admissible $V$-modules. The goal of this section is to prove the following result.
\begin{thm}\label{equivalence_extensions_H1}
There exists an isomorphism
\[
\Psi: \Ext^1(C, A) \overset{\simeq}{\longrightarrow} H_{1}^{\textup{ch}}(\bbP^1, \CA_{V},\CM_{C,0},\CM_{A^{\vee},\infty})^{*}.
\]
\end{thm}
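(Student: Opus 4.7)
The plan is to construct $\Psi$ and its inverse explicitly, matching extensions with ``cocycles" valued in fields. Given an extension $0\to A \to B \to C \to 0$ of admissible $V$-modules, one may split it as a $\Z_+$-graded vector space degree by degree; in any such splitting $B \cong A\oplus C$ and the action of $V$ takes the block form
\begin{align*}
Y_B(v,z)(a+c) = Y_A(v,z) a + Y_C(v,z) c + \beta(v,z) c,
\end{align*}
where $\beta(v,z)\colon C \to A((z))$ is uniquely determined. The quantum field axiom, translation covariance and the Jacobi identity for $Y_B$ translate into: $\beta(v,z)c \in A((z))$, the relation $(\partial_z + T_A)\beta(v,z) = \beta(Tv,z)$, and a ``twisted Jacobi identity'' for $\beta$ that expresses the failure of $\beta$ to intertwine $Y_C$ with $Y_A$ up to the standard Borcherds terms. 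A change of splitting modifies $\beta$ by the coboundary $Y_A(\cdot,z)\gamma - \gamma\,Y_C(\cdot,z)$ for some degree-preserving $\gamma\in\Hom(C,A)$.

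We then set
\begin{align*}
\Psi([B])\bigl(f(z)\, v \otimes \varphi \otimes c\bigr) = \res_{z=0} f(z)\, \varphi\bigl(\beta(v,z) c\bigr),
\end{align*}
and check three things. (i) $\Psi([B])$ descends to $C_1$: integration by parts turns $f'(z)$ into $-\partial_z$ acting on the argument, which combined with translation covariance of $\beta$ and the dual $T$-action on $A^\vee$ read off Definition~\ref{def:dual} at $v=\omega$, produces cancellation. (ii) $\Psi([B])\circ d_2 = 0$: expanding \eqref{eq:d2} and using Proposition~\ref{prop:dual.fancy} to turn residues at $\infty$ into residues at $0$ against $A$, this becomes precisely the twisted Jacobi identity for $\beta$. (iii) Changing the splitting by $\gamma$ modifies $\Psi([B])$ by $d_1^*(\Phi_\gamma)$, where $\Phi_\gamma \in C_0^*$ is defined by $\Phi_\gamma(\varphi\otimes c) = \varphi(\gamma c)$, an image which vanishes on $\ker(d_1)$. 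Together these yield a well-defined $\Psi : \Ext^1(C,A) \to H_1^{\textup{ch}}(\bbP^1,\CA_V,\CM_{C,0},\CM_{A^{\vee},\infty})^{*}$.

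The inverse reverses each step. Working one homogeneous component at a time, a representative $\widetilde{\Psi}\in C_1^*$ with $\widetilde{\Psi}\circ d_2 = 0$ determines
\begin{align*}
\varphi\bigl(\beta(v,z) c\bigr) = \sum_{n\in\Z} \widetilde{\Psi}\bigl(z^n v \otimes \varphi \otimes c\bigr)\, z^{-n-1};
\end{align*}
the admissibility gradings on $V$, $A$, $C$ force $\beta_{(n)}(v)c \in A_{k+r-n-1} = 0$ for $n \gg 0$, while the linear compactness of each $A_n^*$ noted in Remark~\ref{rem:contragred.topology} guarantees that the functional $\varphi \mapsto \widetilde{\Psi}(z^n v \otimes \varphi \otimes c)$ comes from a genuine element of $A$. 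Vanishing of $\widetilde{\Psi}$ on $(\partial+T)\widetilde{C}_1$ and on $d_2(C_2)$ dualize to translation covariance and the twisted Jacobi identity for $\beta$, giving a bona fide $V$-module structure on $A \oplus C$ extending $A$ by $C$. Two representatives differing by $d_1^*\psi$ produce $\beta$'s differing by the coboundary $[Y(\cdot,z),\gamma_\psi]$, so the extension class is independent of the representative, yielding a map inverse to $\Psi$.

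The main obstacle is the verification in step (ii): matching the five terms of the twisted Jacobi identity for $\beta$ term-by-term with the vanishing of $\Psi([B])\circ d_2$ through formula~\eqref{eq:d2}. This requires careful sign and residue bookkeeping, systematic use of the dualising factor $\dualfactor(z)$ from Proposition~\ref{prop:dual.fancy} to interchange the marked points $0$ and $\infty$, and the grading argument on the inverse side that ensures the formal series defining $\beta$ lies in $A((z))$ rather than in a larger completion. The remainder is the translation, from classical Lie-algebra cohomology to vertex-algebra chiral homology, of the standard identification of $\Ext^1$ with a first cohomology group via the splitting cocycle.
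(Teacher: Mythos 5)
Your proposal follows essentially the same route as the paper: the splitting cocycle $\beta(v,z)$ is the paper's $\xi(v,z)$, the functional $\res_{z=0}f(z)\,\varphi(\beta(v,z)c)$ is the paper's $\widetilde{\psi}_B$, the three verifications (descent past the $(\partial+T)$-quotient, vanishing on $\img d_2$ via the Borcherds identity in $B$ together with Proposition \ref{prop:dual.fancy}, independence of the splitting) and the reconstruction of an extension from a cocycle using admissibility and the linear compactness of $A_n^*$ all match the paper's argument. The one slip is the stated relation $(\partial_z+T_A)\beta(v,z)=\beta(Tv,z)$: since $Y_M(Tv,z)=\partial_z Y_M(v,z)$ holds in both $B$ and $C$, the correct relation is simply $\beta(Tv,z)=\partial_z\beta(v,z)$ with no $T_A$ term, and it is this identity (not any dual $T$-action on $A^\vee$) that produces the integration-by-parts cancellation in your step (i).
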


\begin{proof}
We begin by describing the construction of $\Psi$. Let
\begin{align*}
\xymatrix{
0 \ar@{->}[r] & A \ar@{->}[r]^{i} & B \ar@{->}[r]^{p} & C \ar@{->}[r] \ar@/^1.0pc/@[][l]^{s} & 0 \\
}
\end{align*}
be an extension of admissible $V$-modules of $C$ by $A$, for which we have chosen an arbitrary section $s$ of $B \xlongrightarrow{p} C$ at the level of graded vector spaces.

Note that for any given $a \in V$ and $c \in C$, the expression
\begin{equation}\label{definition_cocycle_xi}
\xi(a,z)c := Y_{B}(a,z)s(c) - s\left(Y_{C}(a,z)c\right)    
\end{equation}
lies in $(\ker{p})((z))\simeq A((z))$, which allows us to define a map $\widetilde{\psi}_{B} \colon \Gamma_1 \otimes V \otimes A^{\vee} \otimes C \rightarrow \C$ as
\begin{equation}\label{definition_psi_from_xi}
\widetilde{\psi}_{B}(f(z)a \otimes \varphi \otimes c) = \varphi(\res_{z=0}f(z)\xi(a,z)c).
\end{equation}
If we considered another section $s'\colon C \rightarrow B$ then we may use it to define $\widetilde{\psi}_{B}'$ in just the same way as we did for $s$. Although $\widetilde{\psi}_{B}'$ and $\widetilde{\psi}_{B}$ are distinct elements of $(\Gamma_1 \otimes V \otimes A^{\vee} \otimes C)^*$, their restrictions to the subspace $\ker{d_1}$ coincide, as we shall now show.

Let us write $\eta = s - s'$. We see that $p \circ \eta = 0$ and so we may think of $\eta$ as an element of $\Hom_\C{(C,A)}$. Let $f(z) \cdot a \otimes \varphi \otimes c \in \ker{d_1}$. Then
\[
0=\res_{z=\infty}f(z) Y_{A^{\vee}}(\dualfactor(z)a,z^{-1})\varphi \otimes c + \res_{z=0}f(z)\varphi \otimes Y_{C}(a,z)c.
\]
If we apply the composition
\[
A^{\vee} \otimes C \xrightarrow{\text{id} \otimes \eta} A^{\vee} \otimes A \xlongrightarrow{\text{ev}} \C
\]
to that equality, we obtain
\begin{align*}
0&=\res_{z=\infty}f(z) \left(Y_{A^{\vee}}(\dualfactor(z)a,z^{-1})\varphi \right)(\eta(c)) + \res_{z=0}f(z)\varphi (\eta(Y_{C}(a,z)c))\\
&=-\varphi(\res_{z=0}f(z) Y_{A}(a,z)\eta(c)) + \varphi (\res_{z=0}f(z)\eta(Y_{C}(a,z)c))\\
&=-\varphi(\res_{z=0}f(z) Y_{B}(a,z)\eta(c)) + \varphi (\res_{z=0}f(z)\eta(Y_{C}(a,z)c))\\
&=-\varphi(\res_{z=0}f(z) Y_{B}(a,z)s(c)) + \varphi (\res_{z=0}f(z)s(Y_{C}(a,z)c))\\
&\phantom{=}+\varphi(\res_{z=0}f(z) Y_{B}(a,z)s'(c)) - \varphi (\res_{z=0}f(z)s'(Y_{C}(a,z)c))\\
&=-\widetilde{\psi}_{B}(f(z)a \otimes \varphi \otimes c) + \widetilde{\psi}_{B}'(f(z)a \otimes \varphi \otimes c).
\end{align*}

We will now show the following facts:
\begin{align}\label{eq:fact1}
\widetilde{\psi}_{B}((\partial_z+T)(\Gamma_1 \otimes V \otimes A^{\vee} \otimes C))=0
\end{align}
and
\begin{align}\label{eq:fact2}
\widetilde{\psi}_{B}(\img{d_2})=0,
\end{align}
from which it follows that $\widetilde{\psi}_{B}$ induces a well-defined element $\psi_{B} \in H_{1}^{\textup{ch}}(\bbP^1, \CA_{V},\CM_{C,0},\CM_{A^{\vee},\infty})^{*}$.

For \eqref{eq:fact1} we simply note that for any $f(z) a \otimes \varphi \otimes c \in \Gamma_1 \otimes V \otimes A^{\vee} \otimes C$ we have
\begin{align*}
\widetilde{\psi}_{B}&(\partial_z f(z) a \otimes \varphi \otimes c + f(z) Ta \otimes \varphi \otimes c)\\
&=\varphi (\res_{z=0}\partial_z f(z) \xi(a,z)c) + \varphi(\res_{z=0}f(z) \xi(Ta,z)c)\\
&=\varphi (\res_{z=0}\partial_z f(z) Y_{B}(a,z)s(c) - \res_{z=0}\partial_z f(z) s(Y_{C}(a,z)c))\\
&\phantom{=}+\varphi (\res_{z=0} f(z) Y_{B}(Ta,z)s(c) - \res_{z=0}f(z) s(Y_{C}(Ta,z)c))\\
&=\varphi ((\res_{z=0}\partial_z f(z) Y_{B}(a,z) + \res_{z=0} f(z) Y_{B}(Ta,z))s(c))\\
&\phantom{=}-\varphi (s(\res_{z=0}\partial_z f(z) Y_{C}(a,z)c + \res_{z=0}f(z) Y_{C}(Ta,z)c))\\
&=0.
\end{align*}
To prove \eqref{eq:fact2} we first compute using Proposition \ref{prop:dual.fancy}, or rather equation (\ref{eq:dual}),
\begin{align}
\widetilde{\psi}_{B}&(d_2(f(z_1,z_2)a \otimes b \otimes \varphi \otimes c))\nonumber\\
=&-\widetilde{\psi}_{B}(\res_{z_1=z_2}f(z_1,z_2)Y(a,z_1-z_2)b \otimes \varphi \otimes c \big|_{z_2=z_1})\nonumber\\
&-\widetilde{\psi}_{B}(\res_{z_1=\infty}f(z_1,z_2)b \otimes Y_{A^{\vee}}(\dualfactor(z_1)a,z_1^{-1}) \varphi \otimes c \big|_{z_2=z_1})\nonumber\\
&-\widetilde{\psi}_{B}(\res_{z_1=0}f(z_1,z_2)b \otimes \varphi \otimes Y_{C}(a,z_1)  c \big|_{z_2=z_1})\nonumber\\
&+\widetilde{\psi}_{B}(\res_{z_2=\infty}f(z_1,z_2)a \otimes Y_{A^{\vee}}(\dualfactor(z_2)b,z_2^{-1}) \varphi \otimes c \big|_{z_1=z_1})\nonumber\\
&+\widetilde{\psi}_{B}(\res_{z_2=0}f(z_1,z_2)a \otimes \varphi \otimes Y_{C}(b,z_2)  c \big|_{z_1=z_1})\nonumber\\
=&-\res_{z_2=0}\res_{z_1=z_2}f(z_1,z_2)\varphi(\xi(Y(a,z_1-z_2)b,z_2) c)\nonumber\\
&-\res_{z_2=0}\res_{z_1=\infty}f(z_1,z_2)(Y_{A^{\vee}}(\dualfactor(z_1)a,z_1^{-1}) \varphi)( \xi(b,z_2)c)\nonumber\\
&-\res_{z_2=0}\res_{z_1=0}f(z_1,z_2)\varphi(\xi(b,z_2)(Y_{C}(a,z_1)c))\nonumber\\
&+\res_{z_1=0}\res_{z_2=\infty}f(z_1,z_2)(Y_{A^{\vee}}(\dualfactor(z_2)b,z_2^{-1}) \varphi)(\xi(a,z_1)c)\nonumber\\
&+\res_{z_1=0}\res_{z_2=0}f(z_1,z_2)\varphi(\xi(a,z_1)(Y_{C}(b,z_2)c))\nonumber\\
=&-\res_{z_2=0}\res_{z_1=z_2}f(z_1,z_2)\varphi(\xi(Y(a,z_1-z_2)b,z_2) c)\label{psi_kills_differentials}\\
&+\res_{z_2=0}\res_{z_1=0}f(z_1,z_2)\varphi(Y_{A}(a,z_1)\xi(b,z_2)c)\nonumber\\
&-\res_{z_2=0}\res_{z_1=0}f(z_1,z_2)\varphi(\xi(b,z_2)(Y_{C}(a,z_1)c))\nonumber\\
&-\res_{z_1=0}\res_{z_2=0}f(z_1,z_2)\varphi(Y_{A}(b,z_2)\xi(a,z_1)c)\nonumber\\
&+\res_{z_1=0}\res_{z_2=0}f(z_1,z_2)\varphi(\xi(a,z_1)(Y_{C}(b,z_2)c)).\nonumber
\end{align}

We now check that vanishing of this expression for all $f(z_1,z_2)a\otimes b \otimes \varphi \otimes c \in \Gamma_{2} \otimes V^{\otimes 2} \otimes A^{\vee} \otimes C$ is equivalent to a ``cocycle condition'' on $\xi(\cdot,z)$, which in turn derives from the Borcherds identity in the $V$-modules $B$ and $C$.

Since we started with a short exact sequence $0 \rightarrow A \rightarrow B \rightarrow C \rightarrow 0$ of $V$-modules, we know that the Borcherds identity holds in $B$. In particular, for any given $c \in C$ we have 
\begin{align*}
0=&-\res_{z_2=0}\res_{z_1=z_2}f(z_1,z_2)Y_B(Y(a,z_1 - z_2)b,z_2)s(c)\\
&-\res_{z_1=0}\res_{z_2=0}f(z_1,z_2)Y_{B}(b,z_2)Y_B(a,z_1)s(c)\\
&+\res_{z_1=0}\res_{z_2=0}f(z_1,z_2)Y_{B}(a,z_1)Y_B(b,z_2)s(c)\\
=&-\res_{z_2=0}\res_{z_1=z_2}f(z_1,z_2)(\xi(Y(a,z_1-z_2)b,z_2)c+s(Y_C(Y(a,z_1 - z_2)b,z_2)c))\\
&-\res_{z_1=0}\res_{z_2=0}f(z_1,z_2)Y_{B}(b,z_2)(\xi(a,z_1)c+s(Y_C(a,z_1)c))\\
&+\res_{z_1=0}\res_{z_2=0}f(z_1,z_2)Y_{B}(a,z_1)(\xi(b,z_2)c+s(Y_C(b,z_2)c))\\
=&-\res_{z_2=0}\res_{z_1=z_2}f(z_1,z_2)(\xi(Y(a,z_1-z_2)b,z_2)c+s(Y_C(Y(a,z_1 - z_2)b,z_2)c))\\
&-\res_{z_1=0}\res_{z_2=0}f(z_1,z_2)(Y_{A}(b,z_2)\xi(a,z_1)c+\xi(b,z_2)Y_{C}(a,z_1)c+s(Y_C(b,z_2)Y_C(a,z_1)c))\\
&+\res_{z_1=0}\res_{z_2=0}f(z_1,z_2)(Y_{A}(a,z_1)\xi(b,z_2)c+\xi(a,z_1)Y_{C}(b,z_2)c+s(Y_C(a,z_1)Y_C(b,z_2)c))
\end{align*}
\vspace{-0.75cm}
\begin{align}
=&-\res_{z_2=0}\res_{z_1=z_2}f(z_1,z_2)\xi(Y(a,z_1-z_2)b,z_2)c\hspace{6cm}\label{cocycle_condition_xi}\\
&+\res_{z_1=0}\res_{z_2=0}f(z_1,z_2)Y_{A}(a,z_1)\xi(b,z_2)c\nonumber\\
&-\res_{z_1=0}\res_{z_2=0}f(z_1,z_2)\xi(b,z_2)Y_{C}(a,z_1)c\nonumber\\
&-\res_{z_1=0}\res_{z_2=0}f(z_1,z_2)Y_{A}(b,z_2)\xi(a,z_1)c\nonumber\\
&+\res_{z_1=0}\res_{z_2=0}f(z_1,z_2)\xi(a,z_1)Y_{C}(b,z_2)c\nonumber\\
&-s(\res_{z_2=0}\res_{z_1=z_2}f(z_1,z_2)Y_C(Y(a,z_1 - z_2)b,z_2)c)\nonumber\\
&-s(\res_{z_1=0}\res_{z_2=0}f(z_1,z_2)Y_{C}(b,z_2)Y_C(a,z_1)c)\nonumber\\
&+s(\res_{z_1=0}\res_{z_2=0}f(z_1,z_2)Y_{C}(a,z_1)Y_C(b,z_2)c).\nonumber
\end{align}
The sum of the last three terms in equation (\ref{cocycle_condition_xi}) vanishes due to the Borcherds identity for $C$. The remaining five terms become exactly the same (and in the same order) as the ones that appear in (\ref{psi_kills_differentials}) after applying $\varphi$. In particular, since we assumed that $B$ is a $V$-module, we know that equation (\ref{cocycle_condition_xi}) holds, and therefore the expression (\ref{psi_kills_differentials}) is zero, so we have proved \eqref{eq:fact2}. It follows that $\widetilde{\psi}_B$ descends to a well defined class $\psi_{B} \in H_{1}^{\textup{ch}}(\bbP^1, \CA_{V},\CM_{C,0},\CM_{A^{\vee},\infty})^{*}$.

Now, let $0 \rightarrow A \xrightarrow{i'} B' \xrightarrow{p'} C \rightarrow 0$ be another extension of admissible $V$-modules of $C$ by $A$, isomorphic to the original one, meaning that we have an isomorphism of $V$-modules $\alpha : B \rightarrow B'$ such that the diagram
\begin{align*}
\xymatrix{
0 \ar@{->}[r] & A \ar@{->}[r] \ar@{->}[d]_{\text{id}} & B \ar@{->}[r] \ar@{->}[d] & C \ar@{->}[r] \ar@{->}[d]^{\text{id}} & 0 \\
0 \ar@{->}[r] & A \ar@{->}[r] & B' \ar@{->}[r] & C \ar@{->}[r] & 0 \\
}
\end{align*}
commutes. We use the splitting $s'=\alpha \circ s: C \rightarrow B'$ to define $\xi'(\cdot,z)$ in the same way as before, obtaining the class $\psi_{B'}$. Now we have
\begin{align*}
\xi'(a,z)c&=i'^{-1}(Y_{B'}(a,z)\alpha(s(c)) - \alpha(s(Y_{C}(a,z)c))\\
&=i'^{-1}(\alpha(Y_{B}(a,z)s(c)) - \alpha(s(Y_{C}(a,z)c)))\\
&=i^{-1}(Y_{B}(a,z)s(c)-s(Y_{C}(a,z)c)))\\
&=\xi(a,z)c,
\end{align*}
which means that $\psi_{B}=\psi_{B'}$.

This completes the construction of the map
\[
\Psi: \Ext^1(C, A) \rightarrow H_{1}^{\textup{ch}}(\bbP^1, \CA_{V},\CM_{C,0},\CM_{A^{\vee},\infty})^{*},
\]
which assigns to $[B] \in \Ext^1(C, A)$ the element $\Psi([B])=\psi_{B}\in H_{1}^{\textup{ch}}(\bbP^1,\CA_{V},\CM_{C,0},\CM_{A^{\vee},\infty})^{*}$. It remains to show that $\Psi$ is an isomorphism.

First we prove surjectivity of $\Psi$. Let $\psi \in H_{1}^{\textup{ch}}(\bbP^1,\CA_{V},\CM_{C,0},\CM_{A^{\vee},\infty})^{*}$. We shall construct a $V$-module structure on the vector space $B:= A \oplus C$ in such a way that $0 \rightarrow A \rightarrow B \rightarrow C \rightarrow 0$ is an extension of $V$-modules and $\Psi([B])=\psi$.

We set $Y_{B}(a,z)(m,0)=(Y_{A}(a,z)m,0)$ for all $m \in A$, while for the action of $Y_B(a,z)$ on an element $(0,c)$ for $c\in C$ we set
\[
Y_B(a,z)(0,c)=(\xi(a,z)c,Y_{C}(a,z)c)
\]
for some choice
\[
\xi(a,z)c = \sum_{n\in \Z}\xi_{n}(a)(c)z^{-n-1}\in A((z)).
\]
We now work to define $\xi(-, z)$ in such a way that $Y_B(-, z)$ equipes $B$ with a $V$-module structure, and furthermore that equation \eqref{definition_cocycle_xi} holds for the obvious section $s: C \rightarrow B$, $c \mapsto (0,c)$.

We now choose a retraction $p_{\ker{d_1}}$ of the inclusion $\ker{d_1} \hookrightarrow \Gamma_1 \otimes V \otimes A^{\vee} \otimes C$ at the level of vector spaces, and we define $\xi(-,z)$ by
\begin{equation}\label{definition_xi_from_psi}
\varphi(\xi_{n}(a)(c))=\psi(p_{\ker{d_1}}(z^n a \otimes \varphi \otimes c))    
\end{equation}
for all $\varphi \in A^{\vee}$. This uniquely determines $\xi(-, z)$. Note that the relation
\begin{equation}\label{recovering_psi_from_xi}
\psi(f(z)a \otimes \varphi \otimes c) = \varphi (\res_{z=0}f(z)\xi(a,z)c) 
\end{equation}
holds for all $f(z) a \otimes \varphi \otimes c \in \ker{d_1}$. Therefore, once we have confirmed that $Y_B(-,z)$ defines a $V$-module structure on $B$, comparing the equations (\ref{definition_psi_from_xi}) and (\ref{recovering_psi_from_xi}) will yield $\Psi([B])=\psi$.

In order to prove that $Y_{B}({\vac},z)=\id_{B}$, it suffices to see that $\xi({\vac},z)=0$, and this in turn follows from
\[
\psi(d_2(z_2^{n}(z_1-z_2)^{-1} {\vac} \otimes {\vac} \otimes \varphi \otimes c))=0
\]
which can be seen from formula \eqref{psi_kills_differentials}. On the other hand, we need to check that the Borcherds identity holds in $B$. Since we know it holds in $A$, it suffices to check that equation (\ref{cocycle_condition_xi}) holds. But this follows from the fact that $\psi(\img{d_2})=0$, i.e. from the fact that expression (\ref{psi_kills_differentials}) vanishes. Therefore $B$ is indeed a $V$-module, and this completes the proof that $\Psi$ is surjective.

Finally we check that $\ker{\Psi}=0$. Let $B$ be an extension of $C$ by $A$ for which $\psi_{B}=0$, and $s: C \rightarrow B$ a section. The restriction of $\widetilde{\psi}_{B} : \Gamma_1 \otimes V \otimes A^{\vee} \otimes C \rightarrow \C$ to the subspace $\ker{d_1}$ vanishes, so we have a well defined map
\[
\Omega : \img{d_1} \xlongrightarrow{\simeq} \Gamma_1 \otimes V \otimes A^{\vee} \otimes C / \ker{d_1} \xlongrightarrow{\widetilde{\psi}_{B}} \C,
\]
i.e.
\begin{equation}\label{definition_omega}
\widetilde{\psi}_{B}(f(z) a \otimes \varphi \otimes c)=\Omega(d_1(f(z) a \otimes \varphi \otimes c))
\end{equation}
for all $f(z) a \otimes \varphi \otimes c \in \Gamma_1 \otimes V \otimes A^{\vee} \otimes C$. We choose an arbitrary extension of $\Omega$ to a linear map $\Omega : A^{\vee} \otimes C \rightarrow \C$, defining it as zero on some subspace of $A^{\vee} \otimes C$ complementary to $\img{d_1}$. We then restrict the domain of $\Omega$ in each graded piece $(A^\vee \otimes C)_{2n}$ to its subspace $A_n^* \otimes C_n$, which yields an element $\Omega^\circ$ in the space $\Pi_{n \in \Z_+} (A_n^* \otimes C_n)^*$.

For simplicity, let us assume that the graded pieces of $A$ and $C$ are finite-dimensional. We discuss how to adapt the argument to the infinite-dimensional setting after the proof.

We have an isomorphism
\begin{equation}\label{eq:comp.iso}
\theta \colon \Hom^{\text{gr}}(C,A) = \Pi_{n \in \Z_+}\Hom(C_n,A_n) \xlongrightarrow{\simeq} \Pi_{n \in \Z_+} (A_n^* \otimes C_n)^{*}\\
\end{equation}

Let us define $\eta : C \rightarrow A$ as the graded map given by the inverse image $\theta^{-1}(\Omega^\circ)$. Then we can write $\Omega(\varphi \otimes c)=\theta(\eta)(\varphi \otimes c) = \varphi(\eta(c))$ for any pair of vectors $\varphi \in A^\vee$ and $c \in C$ homogeneous of the same degree. In particular, for all $m \in \Z$, $r,n \in \Z_+$, $a \in V_r$, $\varphi \in A_{r+n-m-1}^*$ and $c \in C_n$ the summands of the vector $d_1(z^m a \otimes \varphi \otimes c)= \varphi \otimes a_{(m)}c - \varphi \circ a_{(m)} \otimes c$ have the required property, i.e. $\text{deg}(\varphi)=\text{deg}(a_{(m)}c)$ and $\text{deg}(\varphi \circ a_{(m)})=\text{deg}(c)$. This allows us to compute
\begin{align*}
&\Omega(d_1(z^m a \otimes \varphi \otimes c))\\
&=\theta(\eta)(\res_{z=\infty}z^m Y_{A^{\vee}}(\dualfactor(z)a,z^{-1})\varphi \otimes c +\res_{z=0}z^m \varphi \otimes Y_{C}(a,z)c)\\
&=\res_{z=\infty}z^m (Y_{A^{\vee}}(\dualfactor(z)a,z^{-1})\varphi)\eta(c) +\res_{z=0}z^m \varphi(\eta(Y_{C}(a,z)c))\\
&=-\res_{z=0}z^m \varphi(Y_{A}(a,z)\eta(c))+\res_{z=0}z^m \varphi(\eta(Y_{C}(a,z)c))\\
&=\varphi(\res_{z=0}z^m (-Y_{A}(a,z)\eta(c)+\eta(Y_{C}(a,z)c))).
\end{align*}
On the other hand, using definitions (\ref{definition_cocycle_xi}) and (\ref{definition_psi_from_xi}) we get
\begin{align*}
\widetilde{\psi}_{B}(z^m a\otimes \varphi \otimes c) &= \varphi(\res_{z=0}z^m \xi(a,z)c)\\
&= \varphi(\res_{z=0}z^m i^{-1}(Y_{B}(a,z)s(c)-s(Y_{C}(a,z)c))).
\end{align*}

Now plugging these results in (\ref{definition_omega}) we obtain that the equality
\[
\varphi(\res_{z=0}z^m (-Y_{A}(a,z)\eta(c)+\eta(Y_{C}(a,z)c)))= \varphi(\res_{z=0}z^m i^{-1}(Y_{B}(a,z)s(c)-s(Y_{C}(a,z)c)))
\]
holds for all $m \in \Z$, $r,n \in \Z_+$, $a \in V_r$, $\varphi \in A_{r+n-m-1}^*$ and $c \in C_n$. Since the argument of $\varphi$ in both sides is homogeneous of degree $r+n-m-1=\text{deg}(\varphi)$, we can deduce that for all $f(z) \in \Gamma_1$, $a\in V$ and $c\in C$ we have
\begin{equation}\label{tilde_s_is_morphism}
\res_{z=0}f(z)(-Y_{A}(a,z)\eta(c)+\eta(Y_{C}(a,z)c))=\res_{z=0}f(z)i^{-1}(Y_{B}(a,z)s(c)-s(Y_{C}(a,z)c)).    
\end{equation}

Therefore, if we define the map $\widetilde{s}=s+i \eta : C \rightarrow B$ we can rewrite (\ref{tilde_s_is_morphism}) as
\[
\res_{z=0}f(z)\widetilde{s}(Y_C(a,z)c) = \res_{z=0}f(z) Y_{B}(a,z)\widetilde{s}(c)
\]
for all $f(z) \in \Gamma_1$, $a\in V$ and $c\in C$, which means that $\widetilde{s}:C \rightarrow B$ is a morphism of $V$-modules. On the other hand $\widetilde{s}$ is a section of our extension since $p \circ \widetilde{s} = p \circ (s+i \eta) = p \circ s$, and thus the original extension is trivial.
\end{proof}

In the arguments above we have assumed the graded pieces of $A$ and $C$ to be finite-dimensional for ease of exposition. This assumption is not essential, however, as we now explain.

Suppose $A$ and $C$ are admissible $V$-modules. Each component $A_n^*$ of $A^\vee$, being the full dual of a discrete vector space, may be equipped with the linearly compact topology, which has the property that its continuous dual $(A_n^*)^{*\text{, cont}}$ (i.e., the space of continuous linear functionals $A_n^* \to \C$) is isomorphic to the original space $A_n$.

We have an isomorphism
\begin{equation*}
\theta \colon \Hom^{\text{gr}}(C,A) = \Pi_{n \in \Z_+}\Hom(C_n,A_n) \xlongrightarrow{\simeq} \Pi_{n \in \Z_+} (A_n^* \otimes C_n)^{*\text{, cont}}
\end{equation*}
which recovers \eqref{eq:comp.iso} in case the graded pieces of $A$ and $C$ are finite dimensional and hence discrete.

As noted in Remark \ref{rem:contragred.topology}, the action of $V$ on $A^\vee$ is continuous. It follows that the differentials $d_i$ in the chiral complex, and the linear map $\Omega : \text{im}(d_1) \rightarrow \C$ are continuous. Extension of $\Omega$ from $\text{im}(d_1)$ to $A^\vee \otimes C$, setting $\Omega$ equal to zero on a complementary subspace, preserves continuity. We may thus define a graded map $\eta : C \rightarrow A$ as we did in the proof of Theorem \ref{equivalence_extensions_H1}, and indeed the rest of the proof follows as before.

\end{document}